\documentclass[12pt]{amsart}
 \usepackage{geometry}                
 \geometry{a4paper}                   
\usepackage{graphicx}
\usepackage{amssymb}
\usepackage{epstopdf}
\usepackage{color}
\usepackage{bbm, dsfont}
\DeclareGraphicsRule{.tif}{png}{.png}{`convert #1 `dirname #1`/`basename #1 .tif`.png}
\usepackage{hyperref}
\usepackage[utf8]{inputenc}
\setcounter{tocdepth}{1}

\usepackage{amssymb,amsthm,amsmath,amssymb,wrapfig,dsfont}
\usepackage[dvipsnames]{xcolor}
\definecolor{myred}{RGB}{251,154,133}
\definecolor{myblue}{RGB}{153,206,227}
\definecolor{mylightblue}{RGB}{0, 150, 255}
\definecolor{mygreen}{RGB}{32, 210, 64}
\definecolor{mygray}{RGB}{220, 220, 220}

\usepackage{tikz}
\usetikzlibrary{decorations.pathmorphing}
\tikzset{snake it/.style={decorate, decoration=snake}}
\usetikzlibrary{shapes.geometric,positioning,decorations.pathreplacing}

\newtheorem{theorem}{Theorem}[section]
\newtheorem{lemma}[theorem]{Lemma}

\newtheorem{proposition}[theorem]{Proposition}
\newtheorem{conjecture}[theorem]{Conjecture}
\theoremstyle{definition}

\theoremstyle{remark}
\newtheorem{remark}[theorem]{Remark}

\theoremstyle{example}


\DeclareFontFamily{OML}{rsfs}{\skewchar\font'177}
\DeclareFontShape{OML}{rsfs}{m}{n}{ <5> <6> rsfs5 <7> <8> <9>
rsfs7 <10> <10.95> <12> <14.4> <17.28> <20.74> <24.88> rsfs10 }{}
\DeclareMathAlphabet{\mathfs}{OML}{rsfs}{m}{n}


\newcommand{\BZ}{{\mathbb{Z}}}



\newcommand{\bae}{\begin{equation}\begin{aligned}}
\newcommand{\eae}{\end{aligned}\end{equation}}


\newcommand{\ep}{{\varepsilon}}





\def\beq{ \begin{equation} }
\def\eeq{ \end{equation} }
\def\mn{\medskip\noindent}

\def\square{\vcenter{\vbox{\hrule height .4pt
  \hbox{\vrule width .4pt height 5pt \kern 5pt
        \vrule width .4pt} \hrule height .4pt}}}

\def\ZZ{\mathbb{Z}}

%
%

\begin{document}

\numberwithin{equation}{section} 

\title{On Covering paths with 3 Dimensional Random Walk}

\author{Eviatar B. Procaccia}
\address[Eviatar B. Procaccia\footnote{Research supported by NSF grant DMS-1407558}]{Texas A\&M University}
\urladdr{www.math.tamu.edu/~procaccia}
\email{eviatarp@gmail.com}
 
\author{Yuan Zhang}
\address[Yuan Zhang]{Texas A\&M University}
\urladdr{http://www.math.tamu.edu/~yzhang1988/}
\email{yzhang1988@math.tamu.edu}

\maketitle


\begin{abstract}
In this paper we find an upper bound for the probability that a $3$ dimensional simple random walk covers each point in a nearest neighbor path connecting 0 and the boundary of an $L_1$ ball of radius $N$. For $d\ge 4$, it has been shown in \cite{cover_probability_d} that such probability decays exponentially with respect to $N$. For $d=3$, however, the same technique does not apply, and in this paper we obtain a slightly weaker upper bound: $\forall \ep>0,\exists c_\ep>0,$  $$P\left({\rm Trace}(\mathcal{P})\subseteq {\rm Trace}\big(\{X_n\}_{n=0}^\infty\big) \right)\le \exp\left(-c_\ep N\log^{-(1+\ep)}(N)\right).$$ 
\end{abstract}
\section{Introduction}
In this paper, we study the probability that the trace of a nearest neighbor path in $\ZZ^3$ connecting 0 and the boundary of a $L_1$ ball of radius $N$ is completely covered by the trace of a  $3$ dimensional simple random walk.

First, we review some results we proved in a recent paper for general $d$'s. For any integer $N\ge 1$, let $\partial B_1(0,N)$ be the boundary of the $L_1$ ball in $\ZZ^d$ with radius $N$. We say that a nearest neighbor path
$$
\mathcal{P}=\big(P_0,P_1,\cdots, P_K\big)
$$
is connecting 0 and $\partial B_1(0,N)$ if $P_0=0$ and $\inf\{n: \|P_n\|_1= N\}=K$. And we say that a path $\mathcal{P}$ is covered by a $d$ dimensional random walk $\{X_{d,n}\}_{n=0}^\infty$ if
$$
{\rm Trace}(\mathcal{P})\subseteq {\rm Trace}(X_{d,0},X_{d,1},\cdots):=\{x\in\BZ^d,\exists n~X_{d,n}=x\}.
$$
In \cite{cover_probability_d}, we have shown that for any $d\ge 2$ such covering probability is maximized over all nearest neighbor paths connecting 0 and $\partial B_1(0,N)$ by the monotonic path that stays within distance one above/below the diagonal $x_1=x_2=\cdots=x_d$. 
\begin{theorem}(Theorem 1.4 in \cite{cover_probability_d})
\label{theorem D random walk}
For each integers $L\ge N\ge 1$, let $\mathcal{P}$ be any nearest neighbor path in $\ZZ^d$ connecting 0 and $\partial B_1(0,N)$. Then
$$
P\big({\rm Trace}(\mathcal{P})\in {\rm Trace}(X_{d,0},\cdots, X_{d,L})\big)\le P\big(\overset{\nearrow}{\mathcal{P}}\in {\rm Trace}(X_{d,0},\cdots, X_{d,L})\big)
$$
where 
$$
\overset{\nearrow}{\mathcal{P}}=\Big({\rm arc}_1[0:d-1],{\rm arc}_2[0:d-1],\cdots, {\rm arc}_{[N/d]}[0:d-1], {\rm arc}_{[N/d]+1}[0:N-d[N/d]] \Big),
$$
$$
{\rm arc}_1[0:d-1]=\left(0, e_1, e_1+e_2,\cdots, \sum_{i=1}^{d-1}e_i\right) 
$$
and ${\rm arc}_k=(k-1)\sum_{i=1}^{d}e_i+{\rm arc}_1$. 
\end{theorem}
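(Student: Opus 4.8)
The plan is to exploit two facts: that covering a larger set of points is always harder, and that the diagonal path is the ``most Euclidean-clustered'' way to join $0$ to $\partial B_1(0,N)$. Write $\mathcal{T}_L:={\rm Trace}(X_{d,0},\dots,X_{d,L})$. For any point sets $B\subseteq A$ one has $\{A\subseteq\mathcal{T}_L\}\subseteq\{B\subseteq\mathcal{T}_L\}$, so the covering probability is non-increasing under inclusion. Since a nearest-neighbor path $\mathcal{P}$ connecting $0$ and $\partial B_1(0,N)$ changes its $L_1$-norm by exactly $\pm1$ at each step, it meets every shell $\partial B_1(0,j)$ for $0\le j\le N$; hence I may pick one point $b_j\in{\rm Trace}(\mathcal{P})\cap\partial B_1(0,j)$ on each shell and set $B=\{b_0=0,b_1,\dots,b_N\}$. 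By monotonicity $P({\rm Trace}(\mathcal{P})\subseteq\mathcal{T}_L)\le P(B\subseteq\mathcal{T}_L)$, so the whole theorem reduces to the extremal statement that, among all \emph{transversals} $\{c_0=0,c_1,\dots,c_N\}$ with $c_j\in\partial B_1(0,j)$, the covering probability is maximized by the diagonal transversal $\overset{\nearrow}{\mathcal{P}}$ (which is itself a transversal, since its $N+1$ vertices have $L_1$-norms $0,1,\dots,N$).

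The guiding principle for this extremal step is the elementary bound $\|x\|_2\ge\|x\|_1/\sqrt{d}$, with equality exactly on the diagonal $x_1=\dots=x_d$: subject to lying on the shell $\partial B_1(0,j)$, a lattice point is closest to the origin, and consecutive shell points remain nearest neighbors, precisely for the diagonal choice. Because a transient simple random walk reaches a point at Euclidean distance $r$ with probability of order $r^{-(d-2)}$, and the Green's function satisfies $G(x,y)\asymp\|x-y\|_2^{-(d-2)}$, every hitting and return probability entering the covering event should be largest when the targets are pulled onto the diagonal. To turn this into a comparison I would decompose the covering event along the shell structure: using the strong Markov property at the successive last-exit times from the shells (equivalently a first-entrance decomposition ordered by $j$), I would express $P(B\subseteq\mathcal{T}_L)$ through the shell-to-shell hitting probabilities $P_{c_j}(\text{reach }c_{j+1})$ and the associated return probabilities, and then bound each contribution by its diagonal value via a \emph{straightening lemma}: replacing any $c_j$ by the diagonal point on its shell does not decrease the covering probability.

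The hard part will be that the covering probability does \emph{not} factorize over the shells — the events ``the walk still must visit shell $j$'' are correlated across $j$ — so the straightening lemma cannot be obtained factor-by-factor from Green's-function asymptotics alone, and such asymptotics would in any case only yield the inequality up to constants rather than the exact comparison claimed for every $L$ and $N$. To get an exact inequality I would straighten one corner at a time: given a transversal that already agrees with $\overset{\nearrow}{\mathcal{P}}$ on shells $0,\dots,j-1$, I would modify only $c_j$ and compare the two covering probabilities by a reflection of the walk across the hyperplane exchanging the two relevant coordinates, arranged so that the reflected trajectory covers the straightened target whenever the original trajectory covers the original one. The central obstacle is that this reflection also displaces the already-fixed inner targets $c_0,\dots,c_{j-1}$; I expect to localize it by applying the reflection only after the last-exit time from the inner shells, so that the coupling touches only the portion of the trajectory responsible for reaching the outer shells while leaving the covering of the fixed initial segment intact. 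Iterating the straightening from $j=1$ up to $j=N$ then transforms an arbitrary transversal into $\overset{\nearrow}{\mathcal{P}}$ without ever decreasing the covering probability, which closes the argument.
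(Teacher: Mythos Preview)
This theorem is not proved in the present paper; it is quoted verbatim as Theorem~1.4 of the companion paper \cite{cover_probability_d} and used here only as a black box input to the $d=3$ argument. So there is no proof in this paper to compare your proposal against.

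Evaluated on its own, your reduction to ``transversals'' is sound: any nearest-neighbor path to $\partial B_1(0,N)$ contains at least one point on each shell $\partial B_1(0,j)$, and $\overset{\nearrow}{\mathcal{P}}$ is itself such a transversal, so it suffices to maximize the covering probability over transversals. The genuine gap lies in the straightening step. Your plan is to reflect the walk across a coordinate-exchanging hyperplane ``only after the last-exit time from the inner shells'', so that the already-covered targets $c_0,\dots,c_{j-1}$ are left undisturbed. But last-exit times are not stopping times, so reflecting the trajectory from such a time onward does not preserve the law of simple random walk. If instead you reflect after a genuine stopping time $\tau$ (for instance the first time all inner targets have been visited), the reflected segment starts at $\sigma(X_\tau)$, which in general differs from $X_\tau$, and the concatenation is no longer a simple random walk path started at $0$. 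In either case the map you describe is not measure-preserving between trajectories covering the original transversal and trajectories covering the straightened one, and nothing in the proposal indicates how to fix this.

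You correctly anticipate that the Green's-function heuristic $G(x,y)\asymp\|x-y\|_2^{-(d-2)}$, together with $\|x\|_2\ge\|x\|_1/\sqrt{d}$, gives only an inequality up to constants, not the exact finite-$L$ comparison the theorem asserts. What is missing is precisely the exact combinatorial or coupling mechanism that \cite{cover_probability_d} provides; your sketch identifies the right obstacle but does not overcome it.
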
 
Then noting that the probability of covering $\overset{\nearrow}{\mathcal{P}}$ is bounded above by the probability a simple random walk returns to the exact diagonal line for $[N/d]$ times, one can introduce the Markov process 
$$
\hat X_{d-1,n}=\left(X_{d,n}^1-X_{d,n}^2,X_{d,n}^2-X_{d,n}^3,\cdots, X_{d,n}^{d-1}-X_{d,n}^{d}\right)
$$
where $X_{d,n}^i$ is the $i$th coordinate of $X_{d,n}$ and see that $\{\hat X_{d-1,n}\}_{n=0}^\infty$ is another $d-1$ dimensional non simple random walk, which is transient when $d\ge 4$. Thus, we immediately have the following upper bound: 
\begin{theorem}(Theorem 1.5 in \cite{cover_probability_d})
\label{Theorem 1.5 d}
There is a $P_d\in (0,1)$ such that for any nearest neighbor path $\mathcal{P}=(P_0,P_1,\cdots, P_K)$ connecting 0 and $\partial B_1(0,N)$ and $\{X_{d,n}\}_{n=0}^\infty$ which is a $d-$dimensional simple random walk starting at 0 with $d\ge 4$, we always have 
$$
P\left({\rm Trace}(\mathcal{P})\subseteq {\rm Trace}\big(\{X_{d,n}\}_{n=0}^\infty\big) \right)\le P_d^{[N/d]}.  
$$
Here $P_d$ equals to the probability that $\{X_{d,n}\}_{n=0}^\infty$ ever returns to the $d$ dimensional diagonal line.  
\end{theorem}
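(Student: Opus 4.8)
The plan is to combine Theorem~\ref{theorem D random walk} with the transience of the projected walk $\{\hat X_{d-1,n}\}_{n=0}^\infty$ introduced above. First I would reduce to the extremal staircase path. Fix a nearest neighbor path $\mathcal P$ connecting $0$ and $\partial B_1(0,N)$. For every integer $L\ge N$, Theorem~\ref{theorem D random walk} gives
\[
P\big({\rm Trace}(\mathcal P)\subseteq {\rm Trace}(X_{d,0},\dots,X_{d,L})\big)\le P\big({\rm Trace}(\overset{\nearrow}{\mathcal P})\subseteq {\rm Trace}(X_{d,0},\dots,X_{d,L})\big).
\]
Since $\mathcal P$ is a finite set, the events on both sides are nondecreasing in $L$ and their unions over $L$ are the corresponding events for the infinite walk, so letting $L\to\infty$ preserves the inequality; it therefore suffices to bound $P\big({\rm Trace}(\overset{\nearrow}{\mathcal P})\subseteq {\rm Trace}(\{X_{d,n}\}_{n=0}^\infty)\big)$.

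Next I would extract a cheap necessary condition for covering $\overset{\nearrow}{\mathcal P}$. From the definition of $\overset{\nearrow}{\mathcal P}$ and the relation ${\rm arc}_k=(k-1)\sum_{i=1}^{d}e_i+{\rm arc}_1$, the vertex $v_k:=k\sum_{i=1}^{d}e_i$ is the initial vertex of ${\rm arc}_{k+1}$ for $0\le k\le [N/d]$, so the trace of $\overset{\nearrow}{\mathcal P}$ contains the $[N/d]+1$ distinct points $v_0=0,v_1,\dots,v_{[N/d]}$, all lying on the diagonal line $\mathcal D:=\{t\sum_{i=1}^{d}e_i:t\in\Z\}$. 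On the covering event the walk visits each $v_k$; the points $v_1,\dots,v_{[N/d]}$ are distinct, nonzero, and hence visited at $[N/d]$ distinct positive times, so the covering event is contained in the event that $\{X_{d,n}\}_{n=0}^\infty$ returns to $\mathcal D$ at least $[N/d]$ times.

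Finally I would pass to the projected walk and count returns. With $\hat X_{d-1,n}=(X_{d,n}^{1}-X_{d,n}^{2},\dots,X_{d,n}^{d-1}-X_{d,n}^{d})$ one has $X_{d,n}\in\mathcal D$ if and only if $\hat X_{d-1,n}=0$; moreover $\{\hat X_{d-1,n}\}_{n=0}^\infty$ is a mean-zero random walk with i.i.d.\ bounded increments on $\Z^{d-1}$ whose step distribution (the images of $\pm e_1,\dots,\pm e_d$) is not supported on any proper subspace, so it is genuinely $(d-1)$-dimensional and therefore transient once $d-1\ge 3$, i.e.\ $d\ge 4$. Let $P_d:=P\big(\hat X_{d-1,n}=0\text{ for some }n\ge 1\big)\in(0,1)$, which by the equivalence above is exactly the probability that $\{X_{d,n}\}$ ever returns to $\mathcal D$. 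Applying the strong Markov property at successive return times of $\hat X_{d-1,n}$ to $0$ shows that the probability of at least $m$ returns equals $P_d^{\,m}$; taking $m=[N/d]$ and combining with the two previous steps yields
\[
P\big({\rm Trace}(\mathcal P)\subseteq {\rm Trace}(\{X_{d,n}\}_{n=0}^\infty)\big)\le P_d^{\,[N/d]}.
\]

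The only substantial ingredient is Theorem~\ref{theorem D random walk}, which the excerpt permits me to assume; once the problem is reduced to the staircase path everything is soft. The points that require a little care are the bookkeeping in the last two steps: one must check that covering the $[N/d]+1$ diagonal checkpoints forces $[N/d]$ rather than $[N/d]+1$ returns (the starting point $v_0=0$ is itself a checkpoint and costs no return), that the product formula $P_d^{\,m}$ is applied to the projected walk $\hat X_{d-1,n}$ rather than to the original simple random walk, and that $\hat X_{d-1,n}$ really is non-degenerate as a $(d-1)$-dimensional walk so that the classical recurrence/transience dichotomy applies.
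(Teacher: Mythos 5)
Your proposal is correct and follows exactly the route the paper outlines for this theorem (which it imports from \cite{cover_probability_d}): reduce to the staircase path via Theorem~\ref{theorem D random walk}, observe that covering it forces at least $[N/d]$ returns to the diagonal, pass to the projected walk $\hat X_{d-1,n}$, and use its transience for $d\ge 4$ together with the strong Markov property at successive returns. The extra bookkeeping you supply (the limit $L\to\infty$, the count of $[N/d]$ rather than $[N/d]+1$ returns, and the non-degeneracy of $\hat X_{d-1,n}$) is all sound.
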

Theorem \ref{Theorem 1.5 d} implies that for each fixed $d\ge 4$, the covering probability decays exponentially with respect to $N$. 

For $d=3$, the same technique we had may not hold since now $\{\hat X_{2,n}\}_{n=0}^\infty$ is a recurrent 2 dimensional random walk, which means that $P_d=1$ and that the original 3 dimensional random walk will return to the diagonal line infinitely often. To overcome this issue, we note that although the diagonal line   
$$
\mathcal{D}_\infty=\{(0,0,0),(1,1,1),\cdots\}
$$
is recurrent, it is possible to find an infinite subset $\mathcal{\tilde D}_\infty$ that is transient. And if we can further show for this specific transient subset we found, the returning probability is uniformly bounded away from 1 (which is not generally true for all transient subsets, as is shown in Counterexample 1 in Section 3), then we are able to show 
$$
P\big(\overset{\nearrow}{\mathcal{P}}\in {\rm Trace}(X_{3,0},X_{3,1},\cdots)\big)\le \exp\left(-c\Big|\mathcal{\tilde D}_\infty \cap \overset{\nearrow}{\mathcal{P}} \Big|\right).
$$
With this approach, we have the following theorem 
\begin{theorem}
\label{Theorem 3}
For each $\ep>0$, there is a $c_\ep \in (0,\infty)$ such that for any $N\ge 2$ and any nearest neighbor path $\mathcal{P}=(P_0,P_1,\cdots, P_K)\subset \ZZ^3$ connecting 0 and $\partial B_1(0,N)$, we have 
$$ 
P\left({\rm Trace}(\mathcal{P})\subseteq {\rm Trace}\big(\{X_{3,n}\}_{n=0}^\infty\big) \right)\le \exp\left(-c_\ep N\log^{-(1+\ep)}(N)\right). 
$$
\end{theorem}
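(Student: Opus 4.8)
The plan is to combine Theorem~\ref{theorem D random walk} (to reduce to the extremal path $\overset{\nearrow}{\mathcal P}$), a reduction to the event that the walk hits all the ``arc corners'' lying on the main diagonal, and a Green's-function computation for a \emph{carefully chosen sparse subset} of that diagonal on which the walk has a \emph{uniform} return bound; a geometric-tail estimate for the number of distinct points of that subset visited then produces the stated bound. \textit{Step 1 (reduction to the diagonal).} By Theorem~\ref{theorem D random walk} with $d=3$ and $L\to\infty$ (the cover event for $X_{3,0},\dots,X_{3,L}$ is increasing in $L$ and converges to the cover event for $\{X_{3,n}\}_{n=0}^\infty$ since $\mathcal P$ is finite), it suffices to bound $P\big({\rm Trace}(\overset{\nearrow}{\mathcal P})\subseteq{\rm Trace}(\{X_{3,n}\})\big)$. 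Every point $k(1,1,1)$ with $0\le k\le[N/3]$ lies on $\overset{\nearrow}{\mathcal P}$, so this is at most $P\big(\{X_{3,n}\}\text{ visits every point of }\mathcal D_N\big)$, where $\mathcal D_N:=\{k(1,1,1):0\le k\le[N/3]\}$.

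\textit{Step 2 (the sparse subset and the return estimate).} Fix $\ep>0$ and a large constant $A=A(\ep)$ (chosen at the end), and let $r_1<r_2<\cdots$ be positive integers with consecutive gaps $r_{j+1}-r_j\asymp A\log^{1+\ep}(r_j)$ — equivalently $r_j\asymp Aj\log^{1+\ep}j$ — and set $\mathcal{\tilde D}_\infty:=\{r_j(1,1,1):j\ge1\}$. Write $G$ for the Green's function of $\{X_{3,n}\}$; recall $G(x,y)\le C|x-y|^{-1}$ on $\ZZ^3$ and that the return probability $p_{\rm ret}:=1-1/G(0,0)$ of the $3$D walk to its starting point is $<1$. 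Splitting a return to $\mathcal{\tilde D}_\infty$ according to whether the walk first re‑hits the same point of $\mathcal{\tilde D}_\infty$ or a different one, and using a union bound,
\[
q:=\sup_{j\ge1}P_{r_j(1,1,1)}\big(\exists\,n\ge1:\ X_{3,n}\in\mathcal{\tilde D}_\infty\big)\ \le\ p_{\rm ret}+\frac{C}{G(0,0)}\,\sup_{j\ge1}\sum_{k\ne j}\frac{1}{|r_j-r_k|}.
\]
The gap growth makes the last supremum finite and as small as we like by taking $A$ large: the terms with $k$ near $j$ contribute $\asymp(\log j)/(A\log^{1+\ep}j)=O_\ep(A^{-1})$, the terms $k\gg j$ contribute $\lesssim\sum_k(Ak\log^{1+\ep}k)^{-1}=O_\ep(A^{-1})$ (this is precisely where $\ep>0$ enters, so that $\sum_k(k\log^{1+\ep}k)^{-1}$ converges), and the terms $k\ll j$ contribute $\lesssim j/r_j=O_\ep(A^{-1})$. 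Hence for $A=A(\ep)$ large we get $q<1$.

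\textit{Step 3 (geometric tail and conclusion).} By the strong Markov property at the successive visits of $\{X_{3,n}\}$ to $\mathcal{\tilde D}_\infty$, each visit is a last one with probability at least $1-q$, so the number $V$ of distinct points of $\mathcal{\tilde D}_\infty$ ever visited satisfies $P(V\ge m)\le q^{m-1}$. Since $|\mathcal{\tilde D}_\infty\cap\mathcal D_N|\ge c\,N\log^{-(1+\ep)}N$ for $N$ large and visiting all of $\mathcal D_N$ forces $V\ge|\mathcal{\tilde D}_\infty\cap\mathcal D_N|$,
\[
P\big({\rm Trace}(\overset{\nearrow}{\mathcal P})\subseteq{\rm Trace}(\{X_{3,n}\})\big)\ \le\ q^{\,c\,N\log^{-(1+\ep)}N-1}\ \le\ \exp\big(-c_\ep N\log^{-(1+\ep)}N\big)
\]
after adjusting constants; the finitely many small $N$ are absorbed by shrinking $c_\ep$. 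Together with Step~1 this proves the theorem.

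\textit{Main obstacle.} The crux is Step~2: one must exhibit a subset that is simultaneously sparse enough for the uniform return bound $q<1$ to hold and dense enough that $\mathcal{\tilde D}_\infty\cap\mathcal D_N$ still grows like $N/\log^{1+\ep}N$. Transience of the subset alone is insufficient — this is the content of Counterexample~1 in Section~3, where a subset with unboundedly large local clusters is transient yet has $q=1$ — and what is really needed is the \emph{uniform-in-$j$} summability of $\sum_{k\ne j}|r_j-r_k|^{-1}$, whose far tail forces gaps growing at least like $\log^{1+\ep}$. With only logarithmic gaps the series $\sum_k(k\log k)^{-1}$ diverges and the argument breaks down; this logarithmic loss is exactly the gap between the present bound and the genuine exponential decay available for $d\ge4$ through Theorem~\ref{Theorem 1.5 d}.
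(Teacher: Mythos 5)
Your proposal is correct and follows essentially the same route as the paper: reduce to the extremal path, pass to a sparse diagonal subset with gaps of order $\log^{1+\ep}$, bound the return probability uniformly by splitting into the same-point return plus $\sum_{k\ne j}C/|x_j-x_k|$, and conclude with a geometric bound on the number of required visits. The only (harmless) difference is that you make the cross-term sum uniformly small for all $j$ by inserting a large tunable constant $A$ into the gap sizes, whereas the paper keeps the gaps fixed, shows the sum tends to $0$ as $j\to\infty$, and handles the finitely many small indices by a separate connectivity argument.
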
 

Note that the upper bound in Theorem \ref{Theorem 3} seems to be non-sharp. The reason is that we did not fully use the geometric property of path $\overset{\nearrow}{\mathcal{P}}$ to minimize the covering probability. I.e., although we require our simple random walk to visit the transient subset for $O(N\log^{-1-\ep}(N))$ times, those returns may be not enough to cover  every point in $\mathcal{\tilde D}_\infty \cap \overset{\nearrow}{\mathcal{P}}$. We conjecture that the actual decay rate is also exponential for $d=3$. Numerical simulations seem to support this as is shown in Section 5. 

\begin{conjecture}
\label{conjecture 3}
There is a $c \in (0,\infty)$ such that for any $N\ge 2$ and any nearest neighbor path $\mathcal{P}=(P_0,P_1,\cdots, P_K)\subset \ZZ^3$ connecting 0 and $\partial B_1(0,N)$, we always have 
$$ 
P\left({\rm Trace}(\mathcal{P})\subseteq {\rm Trace}\big(\{X_{3,n}\}_{n=0}^\infty\big) \right)\le \exp\left(-c N\right). 
$$
\end{conjecture}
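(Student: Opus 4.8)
The plan is to keep the reduction already present in the excerpt but to replace its final "visit a sparse transient subset" step by an argument that charges a constant multiplicative cost for each unit of progress along the diagonal, thereby removing the logarithmic loss of Theorem~\ref{Theorem 3}. By Theorem~\ref{theorem D random walk} it suffices to bound $P\big(\overset{\nearrow}{\mathcal P}\subseteq \mathrm{Trace}(X_{3,\cdot})\big)$ for the extremal staircase path, and since the diagonal points $\{k(1,1,1):0\le k\le M\}$ with $M=[N/3]$ form a subset of $\overset{\nearrow}{\mathcal P}$, it is enough to bound the probability that the walk covers all of these $M+1$ points.

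First I would make the renewal structure underlying the excerpt explicit. Let $\tau_0=0<\tau_1<\tau_2<\cdots$ be the successive times at which $\hat X_{2,n}=0$, i.e.\ the returns of $X_{3,\cdot}$ to the diagonal line; these are a.s.\ infinite since $\hat X_{2,\cdot}$ is recurrent. Writing $S_n=X_{3,n}^1+X_{3,n}^2+X_{3,n}^3$ and $Y_j=S_{\tau_j}/3$, the process $\{Y_j\}$ is a symmetric one-dimensional random walk recording the diagonal coordinate at each return, and the point $k(1,1,1)$ is covered iff $k\in\mathrm{range}(Y)$. Two features of $Y$ drive everything: it is transient on $\ZZ$ (because $X_{3,\cdot}$ is transient, each integer is visited finitely often), and its Green's function satisfies $g(k):=E[\#\{j:Y_j=k\}]=G_{3}(0,k(1,1,1))\asymp 1/|k|$. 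Consequently the expected time $Y$ spends in $\{0,\dots,M\}$ is only $\sum_{k\le M}g(k)\asymp \log M$, so the covering event $\{0,\dots,M\}\subseteq\mathrm{range}(Y)$ is a large deviation for this occupation time; a bare first-moment bound gives only polynomial decay, which is why the geometry must be used.

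The heart of the argument is to convert this occupation deficit into an exponential bound, and here I would abandon the single transient subset in favor of a block-by-block peeling. Partition $\{0,\dots,M\}$ into $\asymp M/b$ consecutive blocks of a large constant length $b$, and expose the trajectory from the far end inward. The claim to prove is a uniform conditional bound: there is $\rho=\rho(b)<1$ such that, conditioned on the trajectory outside the current top block and on the entrance times of $Y$ into it, the conditional probability that all $b$ points of that block are visited is at most $\rho$. Granting this, the strong Markov property applied successively at the entrance times, peeling from the farthest block inward, yields $P(\{0,\dots,M\}\subseteq\mathrm{range}(Y))\le \rho^{\,M/b}=e^{-cN}$, the desired bound. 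The mechanism behind the per-block estimate is that full coverage of $b$ points forces at least $b$ distinct returns to the block, while the localized $g\asymp 1/|k|$ asymptotics force the conditional expected number of returns to be $\ll b$, so complete coverage is an event of probability bounded away from $1$, indeed $\rho(b)\to0$ as $b\to\infty$.

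The main obstacle is establishing this per-block conditional estimate with constants that do not degrade as the block recedes to distance $\asymp N$, and in particular decoupling successive blocks despite the strong positive correlations created by the fact that reaching a far block requires threading through all nearer ones. This requires sharp, uniform control of the three-dimensional Green's function and of the harmonic measure on the diagonal seen from the returns of $Y$, together with a coupling that makes the peeled factors genuinely independent rather than merely bounded in expectation. I expect this decoupling, rather than any single Green's function estimate, to be the crux, and it is exactly the geometric property of $\overset{\nearrow}{\mathcal P}$ that Theorem~\ref{Theorem 3} left unused: incorporating the off-diagonal arc points $(k-1)(1,1,1)+e_1$ and $(k-1)(1,1,1)+e_1+e_2$ of each $\mathrm{arc}_k$ should supply a local, nearly independent constant cost per unit length, since covering a prescribed three-point ``L'' configuration demands a specific local excursion whose conditional probability is bounded away from $1$ uniformly in $k$.
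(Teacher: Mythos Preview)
The statement you are attempting to prove is labeled a \emph{Conjecture} in the paper, not a theorem; the paper does not give a proof of it. Theorem~\ref{Theorem 3} is the paper's actual result, and the authors explicitly present the exponential bound as open, supporting it only with the simulation in Section~\ref{Section discussion}. So there is no ``paper's own proof'' to compare your proposal to.

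That said, your reduction is exactly the one the paper itself sketches in Section~\ref{Section discussion}: pass to the one-dimensional walk $Y_j$ recording the diagonal coordinate at successive returns of $\hat X_{2,\cdot}$ to $0$ (the paper calls this $Z_{3,n}$, up to normalization), and try to show that $\{0,1,\dots,M\}\subseteq\mathrm{range}(Y)$ has probability $\le e^{-cM}$. The paper stops there and records this as a further conjecture; you go one step further by proposing a block-peeling scheme.

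The genuine gap is precisely the one you name yourself: the uniform per-block conditional bound. Your heuristic that ``the conditional expected number of returns to the block is $\ll b$'' is the \emph{unconditional} statement, and it degenerates badly under the conditioning you need. Conditioning on the event that $Y$ covers all blocks farther out forces many traversals, and because the increments of $Y$ have extremely heavy tails (roughly $P(|Y_1|>t)\asymp 1/\log t$, coming from the $1/\log n$ tail of the 2D return time), there is no a priori reason those traversals avoid the current block with uniformly positive probability. Peeling ``from the far end inward'' does not by itself create independence: the entrance times into a block are not a single stopping time but an unbounded random collection, and the strong Markov property gives you nothing multiplicative across blocks without an additional argument bounding the \emph{number} of entrances uniformly. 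The alternative you float at the end --- charging a constant cost for each three-point L-configuration on $\overset{\nearrow}{\mathcal P}$ --- faces the same difficulty: the events ``the walk covers $\mathrm{arc}_k$'' are highly positively correlated across $k$, and a uniform conditional bound on one given the others is exactly what is missing. In short, your proposal correctly identifies the target and the obstruction, but does not overcome it; this is consistent with the paper leaving the statement as a conjecture.
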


The structure of this paper is as follows: In Section 2, we construct the infinite subset $\mathcal{\tilde D}_\infty$ of the diagonal line, calculate its density, and show it is transient. In Section 3, we show the returning probability of $\mathcal{\tilde D}_\infty$ is uniformly bounded away from 1, no matter where on $\mathcal{\tilde D}_\infty$ the random walk starts from. With these results, in Section 4, we finish the proof of Theorem \ref{Theorem 3}. Numerical simulations are given in Section 5 showing possible non tightness of our result.

\section{Infinite Transient Subset of the Diagonal}
Without loss of generality we can concentrate on the proof of Theorem \ref{Theorem 3} for sufficiently large $N$. Recall that 
 $$
\overset{\nearrow}{\mathcal{P}}=\Big({\rm arc}_1[0:d-1],{\rm arc}_2[0:d-1],\cdots, {\rm arc}_{[N/d]}[0:d-1], {\rm arc}_{[N/d]+1}[0:N-d[N/d]] \Big)
$$
is the path connection 0 and $B_1(0,N)$ that maximizes the covering probability. When $d=3$, let 
$$
\mathcal{D}_{[N/3]}=\{(0,0,0),(1,1,1),\cdots, ([N/3],[N/3],[N/3])\}
$$
be the points in $\overset{\nearrow}{\mathcal{P}}$ that lie exactly on the diagonal. Although it is clear that for simple random walk $\{X_{3,n}\}_{n=0}^\infty$ starting at 0,  $\mathcal{D}_\infty$ is a recurrent set, following a similar construction to Spitzer \cite[Chapter 6.26]{spitzer}, we find a transient infinite subset of $\mathcal{D}_\infty$ as follows: for $n_1=0$, $n_2=\lceil \log^{1+\ep}(2)\rceil=1$, and for all $k\ge 3$
\beq
\label{equation step length}
n_k=\left\lceil \sum_{i=1}^k\log^{1+\ep}(i)\right\rceil\in \ZZ,
\eeq
define
$$
\mathcal{\tilde D}_\infty=\{(n_k,n_k,n_k)\}_{k=1}^\infty\subset \mathcal{D}_\infty. 
$$
Since $\log^{1+\ep}(k)>1$ for all $k\ge 3$, it is easy to see that $\{n_k\}_{k=1}^\infty$ is a monotonically increasing sequence. Moreover, for each $1\le k_1<k_2<\infty$, 
\begin{align*}
n_{k_2}-n_{k_1}&=\left\lceil \sum_{i=1}^{k_2}\log^{1+\ep}(i)\right\rceil-\left\lceil \sum_{i=1}^{k_1}\log^{1+\ep}(i)\right\rceil\\
&\ge \sum_{i=k_1+1}^{k_2}\log^{1+\ep}(i)-1.
\end{align*}
This implies that for all $k_2\ge 8$ and $1\le k_1<k_2$, 
\beq
\label{lower bound difference n_k}
n_{k_2}-n_{k_1} \ge \frac{1}{2}\int_{k_1}^{k_2} \log^{1+\ep}(x)dx.
\eeq

For any $N\in\ZZ$, define 
$$
\mathcal{\tilde D}_N=\mathcal{\tilde D}_\infty\cap \mathcal{D}_{N}
$$
and
$$
C_N=\left|\mathcal{\tilde D}_N\right|=\sup\{k: \  n_k\le N\}. 
$$
Recalling the definition of $n_k$ in \eqref{equation step length}, we also equivalently have 
$$
C_N=\sup\left\{k: \sum_{i=1}^k\log^{1+\ep}(i)\le N \right\}=\inf\left\{k: \sum_{i=1}^k\log^{1+\ep}(i)> N \right\}-1. 
$$
\begin{lemma}
\label{lemma C_N}
For any $\ep>0$, there is a constant $C_\ep<\infty$ such that 
$$
C_N\in \left(2^{-1-\ep}N\log^{-1-\ep}(N), C_\ep N\log^{-1-\ep}(N)\right)
$$
for all $N\ge 2$. 
\end{lemma}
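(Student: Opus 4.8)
The plan is to reduce the statement to two elementary one–sided estimates on the partial sums $S_k:=\sum_{i=1}^k\log^{1+\ep}(i)$, using the identity $C_N=\sup\{k:S_k\le N\}$ already recorded above. Since $\log^{1+\ep}$ is increasing on $[1,\infty)$ we get, for every $k\ge 1$, the crude upper bound $S_k\le k\log^{1+\ep}(k)$. The same monotonicity gives the lower bound $S_k=\sum_{i=2}^k\log^{1+\ep}(i)\ge\int_1^k\log^{1+\ep}(x)\,dx$, and integrating by parts,
$$
\int_1^k\log^{1+\ep}(x)\,dx=k\log^{1+\ep}(k)-(1+\ep)\int_1^k\log^{\ep}(x)\,dx\ge k\log^{\ep}(k)\bigl(\log k-(1+\ep)\bigr),
$$
which is at least $\tfrac12 k\log^{1+\ep}(k)$ as soon as $\log k\ge 2(1+\ep)$. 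So there is a threshold $k_\ep<\infty$ with $S_k\ge\tfrac12 k\log^{1+\ep}(k)$ for all $k\ge k_\ep$.

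For the lower bound on $C_N$, put $x:=2^{-1-\ep}N\log^{-1-\ep}(N)$ and $k:=\lfloor x\rfloor+1>x$. Because $\log N\ge\log 2>\tfrac12$ for $N\ge 2$, one has $2^{-1-\ep}\log^{-1-\ep}(N)\le 1$, hence $x\le N$ and, for $N$ large, $k\le N$, so $\log k\le\log N$. Then
$$
S_k\le k\log^{1+\ep}(k)\le(x+1)\log^{1+\ep}(N)=2^{-1-\ep}N+\log^{1+\ep}(N)\le N,
$$
the last inequality holding once $\log^{1+\ep}(N)\le(1-2^{-1-\ep})N$, i.e.\ for all sufficiently large $N$. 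Therefore $C_N\ge k>x$, which is exactly the claimed strict lower inequality.

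For the upper bound, the lower bound just proved forces $C_N\ge k_\ep$ once $N$ is large, so $N\ge S_{C_N}\ge\tfrac12 C_N\log^{1+\ep}(C_N)$, that is $C_N\log^{1+\ep}(C_N)\le 2N$. It remains to replace $\log C_N$ by a multiple of $\log N$: from $C_N\ge 2^{-1-\ep}N\log^{-1-\ep}(N)$ we get $\log C_N\ge\log N-(1+\ep)\log 2-(1+\ep)\log\log N\ge\tfrac12\log N$ for $N$ large, since $\log\log N=o(\log N)$. Combining, $2^{-1-\ep}C_N\log^{1+\ep}(N)\le C_N\log^{1+\ep}(C_N)\le 2N$, whence $C_N\le 2^{2+\ep}N\log^{-1-\ep}(N)$. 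Thus $C_\ep:=2^{2+\ep}$ works for all large $N$, and one enlarges $C_\ep$ if necessary so that the bound also covers the finitely many small values of $N$ (for each of which $C_N$ is a fixed finite integer, and $x<1$, so both inequalities are immediate).

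The one genuinely delicate point I anticipate is the self-referential shape of the estimate: $C_N$ is essentially pinned down by the relation $C_N\log^{1+\ep}(C_N)$ being of order $N$, which involves $\log$ of the unknown quantity $C_N$ rather than of $N$. This is why the argument must be run as a bootstrap — first extract the crude lower bound $C_N\gtrsim N\log^{-1-\ep}(N)$, then use it to certify $\log C_N\ge\tfrac12\log N$ and feed that back into the upper estimate. The only other nuisance is purely bookkeeping: the asserted inclusion is claimed for every $N\ge 2$, not merely asymptotically, so each threshold-dependent step (the integral comparison, the inequality $\log^{1+\ep}(N)\le(1-2^{-1-\ep})N$, and $\log\log N\le(\text{const}_\ep)\log N$) must be supplemented by a finite check; with $\ep$ fixed this is entirely routine.
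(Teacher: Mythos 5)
Your argument is correct and follows essentially the same route as the paper: both directions come down to the two-sided comparison $S_k\asymp k\log^{1+\ep}(k)$ obtained by comparing the sum with $\int_1^k\log^{1+\ep}(x)\,dx$, and then inverting at $k\approx N\log^{-1-\ep}(N)$. The only organizational difference is in the upper bound, where the paper substitutes the explicit candidate $K_N=\lceil 2^{2+\ep}N\log^{-1-\ep}(N)\rceil$ and checks $S_{K_N}>N$ directly, while you derive the implicit inequality $C_N\log^{1+\ep}(C_N)\le 2N$ and bootstrap your lower bound on $C_N$ to replace $\log C_N$ by $\tfrac12\log N$ --- the same estimate, differently packaged (and your deferred finite checks for small $N$, which you correctly flag, do go through).
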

\begin{proof}
Noting that for any $k$ such that 
$$
\sum_{i=1}^k\log^{1+\ep}(i)> N
$$
we must have that $k>C_N$, and that 
\beq
\label{C_N 1}
\begin{aligned}
\sum_{i=1}^k\log^{1+\ep}(i)\ge \int_1^{k} \log^{1+\ep}(x)dx\ge \frac{1}{2^{1+\ep}}\big(k-k^{1/2}\big) \log^{1+\ep}(k),
\end{aligned}
\eeq
for $K_N=\left\lceil 2^{2+\ep}N/\log^{1+\ep}(N)\right\rceil$, we have by \eqref{C_N 1}
\beq
\label{C_N 2}
\begin{aligned}
\sum_{i=1}^{K_N}\log^{1+\ep}(i)&\ge \frac{1}{2^{1+\ep}}\big(K_N-K_N^{1/2}\big) \log^{1+\ep}(K_N)\\
&\ge   \frac{1}{2^{1+\ep}} \cdot K_N\cdot \frac{K_N-K_N^{1/2}}{K_N}\cdot \log^{1+\ep}\left(2^{2+\ep}N/\log^{1+\ep}(N)\right)\\
&\ge  2N\cdot  \frac{K_N-K_N^{1/2}}{K_N}\cdot\frac{\log^{1+\ep}\left(2^{2+\ep}N/\log^{1+\ep}(N)\right)}{\log^{1+\ep}(N)}. 
\end{aligned}
\eeq
Noting that $K_N\to\infty$ as $N\to\infty$ and that 
$$
\lim_{N\to\infty}\frac{\log^{1+\ep}\left(\log^{1+\ep}(N)\right)}{\log^{1+\ep}(N)}=\lim_{N\to\infty} (1+\ep)^{1+\ep} \left[\frac{\log(\log(N))}{\log(N)}\right]^{1+\ep}=0, 
$$
for sufficiently large $N$
\beq
\sum_{i=1}^{K_N}\log^{1+\ep}(i)\ge2N\cdot  \frac{K_N-K_N^{1/2}}{K_N}\cdot\frac{\log^{1+\ep}\left(2^{2+\ep}N/\log^{1+\ep}(N)\right)}{\log^{1+\ep}(N)}>N
\eeq
which implies $C_N< K_N$ and finishes the proof of the upper bound. On the other hand, note that 
$$
\sum_{i=1}^k\log^{1+\ep}(i)\le \int_1^{k+1} \log^{1+\ep}(x)dx\le k \log^{1+\ep}(k+1).
$$
So for any $k\le 2^{-1-\ep}N\log^{-1-\ep}(N)$,
$$
\sum_{i=1}^k\log^{1+\ep}(i)\le k \log^{1+\ep}(k+1)\le 2^{-1-\ep}N\frac{\log^{1+\ep}\left(2^{-1-\ep} N\log^{-1-\ep}(N)+1 \right)}{\log^{1+\ep}(N)}<N. 
$$
Thus the proof of Lemma \ref{lemma C_N}  is complete. 
\end{proof}
With Lemma \ref{lemma C_N}, we next show that $\mathcal{\tilde D}_\infty$ is transient for 3 dimensional simple random walk: 
\begin{lemma}
\label{lemma transient}
For 3 dimensional simple random walk $\{X_{3,n}\}_{n=0}^\infty$, $\mathcal{\tilde D}_\infty$ is a transient subset. 
\end{lemma}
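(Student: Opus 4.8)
The plan is to show transience by a Borel--Cantelli argument applied to the events $A_k = \{X_{3,n} = (n_k,n_k,n_k) \text{ for some } n\}$, or rather to a suitable "last visit" refinement of them, using the local central limit theorem (LCLT) for the simple random walk in $\ZZ^3$. Recall that the three-dimensional Green's function satisfies $G(0,x) \asymp \|x\|^{-1}$, so the probability that the walk ever visits the point $(n_k,n_k,n_k)$ is comparable to $1/n_k$. Since $n_k \asymp k \log^{1+\ep}(k)$ by the estimates leading to \eqref{lower bound difference n_k} (and $C_N$ in Lemma \ref{lemma C_N} quantifies this), we get $\sum_k P(A_k) \asymp \sum_k \frac{1}{k\log^{1+\ep}(k)} < \infty$. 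A naive Borel--Cantelli then gives that only finitely many of the $(n_k,n_k,n_k)$ are visited, i.e. $\mathcal{\tilde D}_\infty$ is transient. The only subtlety is that $P(A_k)$ is the probability of \emph{ever} visiting the $k$-th site starting from $0$, and the events $A_k$ are not independent; but Borel--Cantelli (first lemma) needs no independence, so once the sum of probabilities is shown finite we are done. Hence the real content is the quantitative bound $P(A_k) = P_0(\sigma_{(n_k,n_k,n_k)} < \infty) \le C/n_k$.

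Concretely, the steps I would carry out are: (i) State the LCLT / Green's function asymptotics for $\{X_{3,n}\}$: there is a constant $G_3$ with $P_0(X_{3,n} = x \text{ for some } n) = G(0,x) \le C\|x\|^{-1}$ for $x\neq 0$; this is classical (Spitzer \cite{spitzer}), and since $\|(n_k,n_k,n_k)\|_1 = 3n_k$ (or the Euclidean norm $\sqrt3\, n_k$), we obtain $P(A_k) \le C'/n_k$. (ii) Lower-bound $n_k$: from \eqref{equation step length}, $n_k \ge \sum_{i=1}^k \log^{1+\ep}(i) \ge c\, k \log^{1+\ep}(k)$ for $k$ large (e.g. via $\int_1^k \log^{1+\ep}(x)\,dx$ as in the proof of Lemma \ref{lemma C_N}, or directly since at least half the terms exceed $\log^{1+\ep}(k/2)$). (iii) Conclude $\sum_{k} P(A_k) \le C'' + C'\sum_{k\ge k_0} \frac{1}{k\log^{1+\ep}(k)}$, and this last series converges by the integral test precisely because $\ep > 0$ (the substitution $u = \log k$ turns it into $\int u^{-(1+\ep)}\,du$). (iv) Apply the first Borel--Cantelli lemma: almost surely $X_{3,n} \in \mathcal{\tilde D}_\infty$ for only finitely many pairs $(n,k)$ — equivalently the walk visits only finitely many points of $\mathcal{\tilde D}_\infty$ — which is the definition of $\mathcal{\tilde D}_\infty$ being a transient set.

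I expect the main (and essentially only) obstacle to be invoking the correct form of the hitting-probability estimate. One must be a little careful that "transient subset" means the walk visits only finitely many \emph{points} of $\mathcal{\tilde D}_\infty$ almost surely, not that it visits each point finitely often (each individual point is of course visited finitely often even when the full diagonal $\mathcal{D}_\infty$ is recurrent). For the Borel--Cantelli bound we need the \emph{existence} of a visit, i.e. the Green's function bound $G(0,x)\le C\|x\|^{-1}$, which is exactly Spitzer's estimate and requires no new work. The choice of the step sizes $n_k$ in \eqref{equation step length} is designed precisely so that $\sum 1/n_k < \infty$ while $n_k/k \to \infty$ as slowly as possible (so that the density $C_N \asymp N/\log^{1+\ep}N$ is as large as possible, which is what feeds into Theorem \ref{Theorem 3}); the borderline nature of $\sum \frac1{k\log^{1+\ep}k}$ is the reason the exponent $1+\ep$ rather than $1$ appears throughout. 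Everything else is a routine comparison of sums with integrals, already done in the proof of Lemma \ref{lemma C_N}.
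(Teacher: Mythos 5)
Your proposal is correct, but it reaches transience by a different (and more elementary) route than the paper. The paper invokes Wiener's test (Corollary 6.5.9 of \cite{randomwalkbook}): it decomposes $\mathcal{\tilde D}_\infty$ into dyadic annuli $A_k=\mathcal{\tilde D}_\infty\cap[B_2(0,2^k)\setminus B_2(0,2^{k-1})]$, bounds ${\rm cap}(A_k)\le|A_k|\le C_{2^k}$, and then feeds in the density estimate of Lemma \ref{lemma C_N} to get $\sum_k 2^{-k}{\rm cap}(A_k)\lesssim\sum_k k^{-(1+\ep)}<\infty$. You instead sum the individual hitting probabilities $P_0(T_{x_k}<\infty)\le C/n_k$ over the points $x_k=(n_k,n_k,n_k)$, use $n_k\gtrsim k\log^{1+\ep}(k)$ (which follows from the integral comparison already implicit in \eqref{lower bound difference n_k}), and apply the first Borel--Cantelli lemma; combined with the point-transience of the $3$-dimensional walk (each visited point is visited finitely often), this gives $P(X_{3,n}\in\mathcal{\tilde D}_\infty\text{ i.o.})=0$. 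The two arguments rest on the same quantitative fact --- bounding capacity by cardinality in Wiener's test is essentially the same estimate as summing Green's-function hitting bounds --- so neither is sharper here; yours is more self-contained (no need to cite Wiener's test, only the Green's function asymptotics $G(0,x)\le C\|x\|^{-1}$ and Borel--Cantelli), while the paper's reuses Lemma \ref{lemma C_N}, which it needs anyway for the proof of Theorem \ref{Theorem 3}, and packages the annulus bookkeeping into a standard criterion. Your closing caveat is the right one to flag: Borel--Cantelli controls the number of \emph{points} of $\mathcal{\tilde D}_\infty$ ever visited, and one must separately note that each such point contributes only finitely many visits, which you do.
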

\begin{proof}
According to Wiener's test (see Corollary 6.5.9 of \cite{randomwalkbook}), it is sufficient to show that 
\beq
\sum_{k=1}^\infty 2^{-k} {\rm cap}(A_k)<\infty 
\eeq
where $A_k=\mathcal{\tilde D}_\infty\cap\left[B_2(0,2^{k})\setminus B_2(0,2^{k-1})\right]$. Then according to the definition of capacity (see Section 6.5 of  \cite{randomwalkbook}), we have for all $k\ge 1$
\beq
\label{equation transient 1}
{\rm cap}(A_k)\le |A_k|\le \left|\mathcal{\tilde D}_\infty\cap B_2(0,2^{k})\right|\le \left|\mathcal{\tilde D}_{2^k}\right|=C_{2^k}. 
\eeq
By Lemma \ref{lemma C_N},
\beq
\label{equation transient 2}
{\rm cap}(A_k)\le C_{2^k}\le \frac{C_\ep}{\log^{1+\ep}(2)} \frac{2^k}{k^{1+\ep}}. 
\eeq
Thus we have 
$$
\sum_{k=1}^\infty 2^{-k} {\rm cap}(A_k)\le \frac{C_\ep}{\log^{1+\ep}(2)}  \sum_{k=1}^\infty  \frac{1}{k^{1+\ep}}<\infty
$$
which implies that $\mathcal{\tilde D}_\infty$ is transient. 
\end{proof}

\section{Uniform Upper Bound on Returning Probability}
Now we have $\mathcal{\tilde D}_\infty$ is transient, i.e.,
$$
P\left(X_n\in \mathcal{\tilde D}_\infty \  \text{i.o.}\right)=0,
$$
which immediately implies that there must be some $\bar x\in \ZZ^3\setminus \mathcal{\tilde D}_\infty$ such that 
\beq
\label{visiting probability<1}
P_{\bar x}(T_{\mathcal{\tilde D}_\infty}<\infty)<1,
\eeq
where $T_{\mathcal{\tilde D}_\infty}$ is the first time a simple random walk visits $\mathcal{\tilde D}_\infty$, and $P_x(\cdot)$ is the distribution of the simple random walk condition on it starting at $x$. Then note that $\mathcal{\tilde D}_\infty$ is a subset of the diagonal line, which implies $\mathcal{\tilde D}_\infty$ has no interior point while $\ZZ^3\setminus \mathcal{\tilde D}_\infty$ is connected. Thus for any $x_k\in \mathcal{\tilde D}_\infty$, there exists a nearest neighbor path 
$$
\mathcal{Y}=\{y_0,y_1,\cdots y_m\}
$$
with $y_0=x_k$, $y_m=\bar x$ while $y_i\in \ZZ^3\setminus \mathcal{\tilde D}_\infty$, for all $i=1,2,\cdots, m-1$. Combining this with the fact that 
$$
P_x(T_{\mathcal{\tilde D}_\infty}<\infty)=\frac{1}{6}\sum_{i=1}^3 \left[P_{x+e_i}(T_{\mathcal{\tilde D}_\infty}<\infty)+P_{x-e_i}(T_{\mathcal{\tilde D}_\infty}<\infty)\right]
$$
for all $x\in \ZZ^3\setminus \mathcal{\tilde D}_\infty$, we have
$$
P_{y_i}(T_{\mathcal{\tilde D}_\infty}<\infty)<1, 
$$
for all $i\ge 1$, which in turns implies that 
\beq
\label{returning probability<1}
P_{x_k}(\bar T_{\mathcal{\tilde D}_\infty}<\infty)<1
\eeq
for all $k$, where $\bar T_{\mathcal{\tilde D}_\infty}$ is the first returning time, i.e. the stopping time a simple random walk first visit $\mathcal{\tilde D}_\infty$ after its first step. 

However,  in order to use the transient set $\mathcal{\tilde D}_\infty$ as if it is just like one point in a transient random walk, \eqref{returning probability<1} is not enough. We need to show that starting from each point $x_k=(n_k,n_k,n_k)\in\mathcal{\tilde D}_\infty$, the probability $P_{x_k}(\bar T_{\mathcal{\tilde D}_\infty}<\infty)$ is uniformly bounded away from 1. And this is not generally true for all transient subsets $A$. First of all, when $A$ has interior points, the returning probability of those points are certainly one. And even if  $A$ has no interior point and $\ZZ^3\setminus A$ is connected, we have the following counter example:

\mn {\bf Counterexample 1:} Consider subsets 
$$
A_k=\{(2^k,1,n),(2^k,-1,n),(2^k+1,0,n),(2^k-1,0,n)\}_{n=-k}^k\cup\{(2^k,0,0)\}
$$
and 
$$
A=\bigcup_{k=1}^\infty A_k
$$
where the 2 dimensional projection of $A$ is illustrated in Figure \ref{counterexample} (the distances between $A_k$'s are not exact in the figure):

\begin{figure}[h!]
\centering
\begin{tikzpicture}
\tikzstyle{redcirc}=[circle,
draw=black,fill=myred,thin,inner sep=0pt,minimum size=1.5mm]
\tikzstyle{bluecirc}=[circle,
draw=black,fill=myblue,thin,inner sep=0pt,minimum size=1.5mm]

\node (v1) at (0,0) [bluecirc] {};
\node (v2) at (0.25,0) [bluecirc] {};
\node (v3) at (0.5,0) [bluecirc] {};
\node (v4) at (0.5,0.25) [bluecirc] {};
\node (v5) at (0,0.25) [bluecirc]{};
\node (v6) at (0.5,-0.25) [bluecirc] {};
\node (v7) at (0,-0.25) [bluecirc]{};

\draw [thick] (v1) to (v2);
\draw [thick] (v1) to (v5);
\draw [thick] (v2) to (v3);
\draw [thick] (v3) to (v4);
\draw [thick] (v3) to (v6);
\draw [thick] (v1) to (v7);


\node (v10) at (2,0) [bluecirc] {};
\node (v20) at (2.25,0) [bluecirc] {};
\node (v25) at (2.5,0.5) [bluecirc] {};
\node (v30) at (2.5,0) [bluecirc] {};
\node (v40) at (2.5,0.25) [bluecirc] {};
\node (v50) at (2,0.25) [bluecirc]{};
\node (v55) at (2,0.5) [bluecirc]{};
\node (v60) at (2.5,-0.25) [bluecirc] {};
\node (v65) at (2.5,-0.5) [bluecirc] {};
\node (v70) at (2,-0.25) [bluecirc]{};
\node (v75) at (2,-0.5) [bluecirc]{};

\draw [thick] (v10) to (v20);
\draw [thick] (v10) to (v50);
\draw [thick] (v20) to (v30);
\draw [thick] (v30) to (v40);
\draw [thick] (v50) to (v55);
\draw [thick] (v40) to (v25);
\draw [thick] (v10) to (v70);
\draw [thick] (v70) to (v75);
\draw [thick] (v30) to (v60);
\draw [thick] (v60) to (v65);


\node (v100) at (6,0) [bluecirc] {};
\node (v200) at (6.25,0) [bluecirc] {};
\node (v250) at (6.5,0.5) [bluecirc] {};
\node (v300) at (6.5,0) [bluecirc] {};
\node (v400) at (6.5,0.25) [bluecirc] {};
\node (v500) at (6,0.25) [bluecirc]{};
\node (v550) at (6,0.5) [bluecirc]{};

\node (v101) at (6,0.75) [bluecirc]{};
\node (v102) at (6.5,0.75) [bluecirc]{};

\node (v103) at (6,-0.25) [bluecirc] {};
\node (v104) at (6,-0.5) [bluecirc] {};
\node (v105) at (6,-0.75) [bluecirc] {};

\node (v303) at (6.5,-0.25) [bluecirc] {};
\node (v304) at (6.5,-0.5) [bluecirc] {};
\node (v305) at (6.5,-0.75) [bluecirc] {};

\draw [thick] (v100) to (v200);
\draw [thick] (v100) to (v500);
\draw [thick] (v200) to (v300);
\draw [thick] (v300) to (v400);
\draw [thick] (v500) to (v550);
\draw [thick] (v400) to (v250);

\draw [thick] (v550) to (v101);
\draw [thick] (v250) to (v102);

\draw [thick] (v100) to (v103);
\draw [thick] (v103) to (v104);
\draw [thick] (v104) to (v105);

\draw [thick] (v300) to (v303);
\draw [thick] (v303) to (v304);
\draw [thick] (v304) to (v305);


\node (v1000) at (13,0) [bluecirc] {};
\node (v2000) at (13.25,0) [bluecirc] {};
\node (v2500) at (13.5,0.5) [bluecirc] {};
\node (v3000) at (13.5,0) [bluecirc] {};
\node (v4000) at (13.5,0.25) [bluecirc] {};
\node (v5000) at (13,0.25) [bluecirc]{};
\node (v5500) at (13,0.5) [bluecirc]{};

\node (v1010) at (13,0.75) [bluecirc]{};
\node (v1020) at (13.5,0.75) [bluecirc]{};

\node (v1001) at (13,1) [bluecirc]{};
\node (v1002) at (13.5,1) [bluecirc]{};

\node (v1003) at (13,-0.25) [bluecirc] {};
\node (v1004) at (13,-0.5) [bluecirc] {};
\node (v1005) at (13,-0.75) [bluecirc] {};
\node (v1006) at (13,-1) [bluecirc] {};

\node (v3003) at (13.5,-0.25) [bluecirc] {};
\node (v3004) at (13.5,-0.5) [bluecirc] {};
\node (v3005) at (13.5,-0.75) [bluecirc] {};
\node (v3006) at (13.5,-1) [bluecirc] {};

\draw [thick] (v1000) to (v2000);
\draw [thick] (v1000) to (v5000);
\draw [thick] (v2000) to (v3000);
\draw [thick] (v3000) to (v4000);
\draw [thick] (v5000) to (v5500);
\draw [thick] (v4000) to (v2500);

\draw [thick] (v5500) to (v1010);
\draw [thick] (v2500) to (v1020);

\draw [thick] (v1010) to (v1001);
\draw [thick] (v1020) to (v1002);

\draw [thick] (v1000) to (v1003);
\draw [thick] (v1003) to (v1004);
\draw [thick] (v1004) to (v1005);
\draw [thick] (v1005) to (v1006);

\draw [thick] (v3000) to (v3003);
\draw [thick] (v3003) to (v3004);
\draw [thick] (v3004) to (v3005);
\draw [thick] (v3005) to (v3006);

\node (l1) at (0.25,-0.5) {\small{$A_1$}};
\node (l2) at (2.25,-0.75) {\small{$A_2$}};
\node (l3) at (6.25,-1) {\small{$A_3$}};
\node (l4) at (13.25,-1.25) {\small{$A_4$}};


\draw[step=0.25cm,gray,very thin] (-0.2,-0.2) grid (13.9,1.49);
\draw[step=0.25cm,gray,very thin] (-0.2,-0.2) grid (13.9,-1.49);

\end{tikzpicture}
\caption{A counter example to uniform upper bound on returning probability}
\label{counterexample}
\end{figure}
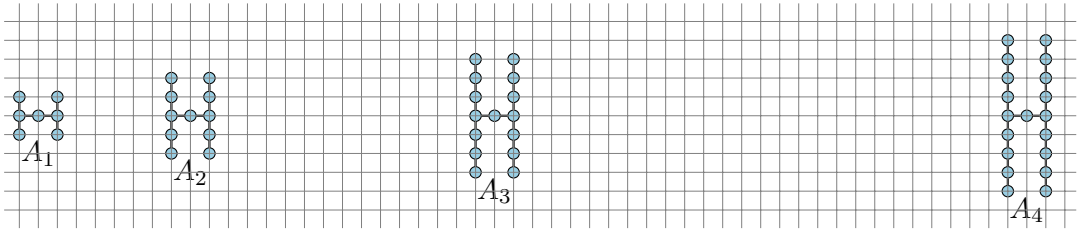 

Using Wiener's test, it is easy to see $A$ is a transient subset. However, for points $a_k=(2^k,0,0)\in A$, $k\ge 1$, in order to have a simple random walk starting at $a_k$ never returns to $A$, we must have the first $k$ steps of the random walk be along the $z-$coordinate. Thus
$$
P_{a_k}(T_{A}=\infty)<\frac{1}{3^k}, 
$$
which implies that 
$$
\lim_{k\to\infty} P_{a_k}(T_{A}<\infty)\ge \lim_{k\to\infty} \left(1-\frac{1}{3^k}\right)=1.  
$$
\begin{remark}
It would be interesting to characterize uniformly transient sets i.e. sets with uniformly bounded return probabilities. 
\end{remark}

Fortunately, for the specific transient subset $\mathcal{\tilde D}_\infty$, since it becomes more and more sparse as $x\to\infty$, we can still have: 

\begin{lemma}
\label{lemma uniform bound return}
For any $\ep>0$, there is a $c_{\ep,1}>0$ such that 
\beq
\label{returning probability uniform}
\sup_{k\ge 1}P_{x_k}(\bar T_{\mathcal{\tilde D}_\infty}<\infty)\le 1-c_{\ep,1}.
\eeq
\end{lemma}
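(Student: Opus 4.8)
The plan is to split the event $\{\bar T_{\mathcal{\tilde D}_\infty}<\infty\}$ according to whether the walk first returns to its own starting point $x_k=(n_k,n_k,n_k)$ or first reaches some \emph{other} point of $\mathcal{\tilde D}_\infty$. The first alternative contributes a universal constant $p_3:=P_0(\bar T_{\{0\}}<\infty)\in(0,1)$, the return probability of a $3$-dimensional simple random walk to its starting point (by translation invariance), and the crux is to show that the probability of the second alternative is $o(1)$ as $k\to\infty$, exploiting the decay of the Green's function together with the sparsity of $\mathcal{\tilde D}_\infty$ encoded in \eqref{equation step length}. For the finitely many small indices $k$ not covered by that asymptotic statement, the strict inequality \eqref{returning probability<1} already gives what is needed.

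First I would note that any two distinct points of $\mathcal{\tilde D}_\infty$ are at $\ell^1$-distance $\|x_j-x_k\|_1=3|n_j-n_k|\ge 3$, so after one step the walk has left $\mathcal{\tilde D}_\infty$; a union bound over the first point of $\mathcal{\tilde D}_\infty$ that is visited then gives
\begin{equation*}
P_{x_k}\big(\bar T_{\mathcal{\tilde D}_\infty}<\infty\big)\ \le\ P_{x_k}\big(\bar T_{\{x_k\}}<\infty\big)+\sum_{j\neq k}P_{x_k}\big(T_{\{x_j\}}<\infty\big)\ =\ p_3+\sum_{j\neq k}P_{x_k}\big(T_{\{x_j\}}<\infty\big).
\end{equation*}
Writing $G$ for the Green's function of the $3$-dimensional walk and using $P_{x_k}(T_{\{x_j\}}<\infty)=G(x_k,x_j)/G(x_j,x_j)=(1-p_3)\,G(x_k,x_j)$, the classical bound $G(x,y)\le C\|x-y\|_1^{-1}$ for $x\neq y$ (see \cite{randomwalkbook}) and $\|x_k-x_j\|_1=3|n_k-n_j|$ show that the remaining sum is at most $C'S_k$ with $S_k:=\sum_{j\neq k}|n_k-n_j|^{-1}$.

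It then remains to show $S_k\to 0$. By \eqref{lower bound difference n_k}, $n_k-n_j\ge\frac12\int_j^k\log^{1+\ep}(x)\,dx$ for $8\le j<k$, and symmetrically for $j>k\ge 8$. Splitting $S_k$ over the ranges $j<k/2$, $\ k/2\le j<k$, $\ k<j\le 2k$, $\ j>2k$ (the finitely many $j<8$ being negligible for large $k$, since $n_k\to\infty$), and bounding $\log^{1+\ep}$ on the interval of integration from below by its value at the left endpoint when $j$ is comparable to $k$ and by $\log^{1+\ep}(\max(j,k)/2)$ on the outer half of the interval when $j$ is far from $k$, one obtains for all large $k$
\begin{equation*}
S_k\ \le\ \frac{C''(1+\log k)}{\log^{1+\ep}(k)}+\sum_{j>2k}\frac{C''}{j\,\log^{1+\ep}(j/2)},
\end{equation*}
where the first term equals $C''(1+\log k)\log^{-(1+\ep)}(k)\to 0$ and the second is the tail of a convergent series, hence also $\to 0$. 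Choosing $K_0$ with $C'S_k\le(1-p_3)/2$ for all $k\ge K_0$ gives $P_{x_k}(\bar T_{\mathcal{\tilde D}_\infty}<\infty)\le(1+p_3)/2<1$ for $k\ge K_0$; combining this with \eqref{returning probability<1} for $1\le k<K_0$ and setting $c_{\ep,1}:=\min\{\tfrac{1-p_3}{2},\ \min_{1\le k<K_0}(1-P_{x_k}(\bar T_{\mathcal{\tilde D}_\infty}<\infty))\}>0$ completes the argument.

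The only genuinely delicate step is the bound on $S_k$: the ranges must be arranged so that both the $\Theta(k)$ indices $j$ adjacent to $k$ — where the gaps $n_k-n_j$ are only of order $(k-j)\log^{1+\ep}(k)$, so their summed contribution is $\asymp\log^{-\ep}(k)$ — and the far tail $j\to\infty$ — where the growing factor $\log^{1+\ep}(j)$ must be retained rather than replaced by $\log^{1+\ep}(k)$ — yield vanishing contributions; everything else is routine bookkeeping with \eqref{lower bound difference n_k} and the Green's function estimate.
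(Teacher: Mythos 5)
Your proposal is correct and follows essentially the same route as the paper's proof: the same decomposition into the self-return probability $P_0(\bar T_0<\infty)$ plus a union bound over the other points of $\mathcal{\tilde D}_\infty$, the same hitting estimate $P_x(T_y<\infty)\le C\|x-y\|^{-1}$, and the same use of \eqref{lower bound difference n_k} to show the sum vanishes as $k\to\infty$. The only differences are cosmetic bookkeeping choices (you split the sum at $k/2$ and $2k$ where the paper splits at $k^{1/2}$ and $k^2$, and you phrase the hitting bound via the Green's function), so no further comment is needed.
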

\begin{proof}
With \eqref{returning probability<1} showing all returning probabilities are strictly less than 1, it is sufficient for us to show that
\beq
\label{returning probability uniform 1}
\limsup_{k\to\infty} P_{x_k}(\bar T_{\mathcal{\tilde D}_\infty}<\infty)<1. 
\eeq 
Actually, here we prove a stronger statement
\beq
\label{returning probability uniform 2}
\lim_{k\to\infty} P_{x_k}(\bar T_{\mathcal{\tilde D}_\infty}<\infty)=P_0(\bar T_0<\infty)<1. 
\eeq 
Note that for each $k$
\begin{align*}
&P_{x_k}(\bar T_{\mathcal{\tilde D}_\infty}<\infty)> P_{x_k}(\bar T_{x_k}<\infty)=P_0(\bar T_0<\infty),\\
&P_{x_k}(\bar T_{\mathcal{\tilde D}_\infty}<\infty)\le P_{x_k}(\bar T_{x_k}<\infty)+P_{x_k}(T_{\mathcal{\tilde D}_\infty\setminus \{x_k\}}<\infty),
\end{align*}
and that 
$$
P_{x_k}(\bar T_{\mathcal{\tilde D}_\infty\setminus \{x_k\}}<\infty)\le \sum_{i=1}^{k-1} P_{x_k}(T_{x_i}<\infty)+\sum_{i=k+1}^{\infty} P_{x_k}( T_{x_i}<\infty). 
$$
It suffices for us to show that 
\beq
\label{tail 1}
\lim_{k\to\infty} \sum_{i=1}^{k-1} P_{x_k}(T_{x_i}<\infty)=0,
\eeq
and that 
\beq
\label{tail 2}
\lim_{k\to\infty} \sum_{i=k+1}^{\infty} P_{x_k}(T_{x_i}<\infty)=0. 
\eeq
To show \eqref{tail 1} and \eqref{tail 2}, we first note the well known result that there is a $C<\infty$ such that for any $x\not=y\in \ZZ^3$, 
$$
P_x(T_y<\infty)\le \frac{C}{|x-y|}.
$$
First, to show \eqref{tail 1} we have according to \eqref{lower bound difference n_k}, for any $k\ge 8$
\beq
\label{tail 1 1}
\sum_{i=1}^{k-1} P_{x_k}(T_{x_i}<\infty)\le \sum_{i=1}^{k-1} \frac{C}{|x_k-x_i|}\le 2C\sum_{i=1}^{k-1} \frac{1}{\int_{i}^{k} \log^{1+\ep}(x)dx}.
\eeq
Thus it is again sufficient to show that 
\beq
\label{tail 1 2}
\lim_{k\to\infty} \sum_{i=1}^{k-1} \frac{1}{\int_{i}^{k} \log^{1+\ep}(x)dx}=0. 
\eeq
Note that 
\beq
\label{tail 1 3}
\sum_{i=1}^{k-1} \frac{1}{\int_{i}^{k} \log^{1+\ep}(x)dx}=\sum_{i=1}^{[k^{1/2}]} \frac{1}{\int_{i}^{k} \log^{1+\ep}(x)dx}+\sum_{i=\left\lceil k^{1/2}\right\rceil}^{k-1} \frac{1}{\int_{i}^{k} \log^{1+\ep}(x)dx}.
\eeq
For each $k\ge 8$ and $i\le [k^{1/2}]$, we have 
$$
\int_{i}^{k} \log^{1+\ep}(x)dx\ge \int_{k/2}^{k} \log^{1+\ep}(x)dx\ge \int_{k/2}^{k}1dx= k/2. 
$$
Thus 
\beq
\label{tail 1 4}
\sum_{i=1}^{[k^{1/2}]} \frac{1}{\int_{i}^{k} \log^{1+\ep}(x)dx}\le\sum_{i=1}^{[k^{1/2}]} \frac{2}{k}\le \frac{2}{k^{1/2}}=o(1). 
\eeq
Then for each $k\ge 8$ and $i\in \left[\left\lceil k^{1/2}\right\rceil, k-1\right]$, 
$$
\int_{i}^{k} \log^{1+\ep}(x)dx\ge \int_{i}^{k} \log^{1+\ep}(k^{1/2})dx= \frac{1}{2^{1+\ep}}(k-i) \log^{1+\ep}(k).
$$
Thus 
\beq
\label{tail 1 5}
\sum_{i=\left\lceil k^{1/2}\right\rceil}^{k-1} \frac{1}{\int_{i}^{k} \log^{1+\ep}(x)dx}\le \frac{2^{1+\ep}}{\log^{1+\ep}(k)}\sum_{i=1}^k \frac{1}{i}. 
\eeq
Noting that 
$$
\sum_{i=1}^k \frac{1}{i}\le 1+\int_1^k\frac{1}{x}dx=1+\log(k)
$$
one can immediately have 
\beq
\label{tail 1 6}
\sum_{i=\left\lceil k^{1/2}\right\rceil}^{k-1} \frac{1}{\int_{i}^{k} \log^{1+\ep}(x)dx}\le \frac{2^{1+\ep}}{\log^{1+\ep}(k)}\sum_{i=1}^k \frac{1}{i}\le \frac{2^{1+\ep} [1+\log(k)]}{\log^{1+\ep}(k)}=o(1). 
\eeq
Combining \eqref{tail 1 2}, \eqref{tail 1 4} and \eqref{tail 1 6}, we obtain \eqref{tail 1}. 

Then, to show \eqref{tail 2} we have according to \eqref{lower bound difference n_k}, for any $k\ge 8$
\beq
\label{tail 2 1}
\sum_{i=k+1}^{\infty} P_{x_k}(T_{x_i}<\infty)\le \sum_{i=k+1}^{\infty} \frac{C}{|x_i-x_k|}\le 2C\sum_{i=k+1}^{\infty} \frac{1}{\int_{k}^{i} \log^{1+\ep}(x)dx}
\eeq
Thus it is again sufficient to show that 
\beq
\label{tail 2 2}
\lim_{k\to\infty} \sum_{i=k+1}^{\infty} \frac{1}{\int_{k}^{i} \log^{1+\ep}(x)dx}=0. 
\eeq
Now for each $k$ we separate the infinite summation in \eqref{tail 2 2} as 
\beq
\label{tail 2 3}
\sum_{i=k+1}^{\infty} \frac{1}{\int_{k}^{i} \log^{1+\ep}(x)dx}=\sum_{i=k+1}^{k^2} \frac{1}{\int_{k}^{i} \log^{1+\ep}(x)dx}+\sum_{i=k^2+1}^{\infty} \frac{1}{\int_{k}^{i} \log^{1+\ep}(x)dx}. 
\eeq
For its first term we use similar calculation as in \eqref{tail 1 5} and have 
\beq
\label{tail 2 4}
\sum_{i=k+1}^{k^2} \frac{1}{\int_{k}^{i} \log^{1+\ep}(x)dx}\le  \frac{1}{\log^{1+\ep}(k)}\sum_{i=k+1}^{k^2}\frac{1}{i-k}\le  \frac{1}{\log^{1+\ep}(k)}\sum_{i=1}^{k^2}\frac{1}{i}.
\eeq
And since 
$$
\sum_{i=1}^{k^2} \frac{1}{i}\le 1+\int_1^{k^2}\frac{1}{x}dx=1+2\log(k)
$$
we have 
\beq
\label{tail 2 5}
\sum_{i=k+1}^{k^2} \frac{1}{\int_{k}^{i} \log^{1+\ep}(x)dx}\le  \frac{1+2\log(k)}{\log^{1+\ep}(k)}=o(1).
\eeq
At last for the second term in \eqref{tail 2 3}, we have for each $k\ge 8$ and $i\ge k^2+1$, 
$$
\int_{k}^{i} \log^{1+\ep}(x)dx\ge \int_{i^{1/2}}^{i} \log^{1+\ep}(x)dx\ge (i-i^{1/2})\log^{1+\ep}(i^{1/2})\ge \frac{1}{2^{2+\ep}} i \log^{1+\ep}(i).
$$
Thus
\beq
\label{tail 2 6}
\sum_{i=k^2+1}^{\infty} \frac{1}{\int_{k}^{i} \log^{1+\ep}(x)dx}\le 2^{2+\ep}\sum_{i=k^2+1}^{\infty} \frac{1}{i \log^{1+\ep}(i)}.
\eeq
Finally, noting that 
$$
\sum_{i=3}^{\infty} \frac{1}{i \log^{1+\ep}(i)}\le \int_2^\infty \frac{1}{x \log^{1+\ep}(x)}dx=\frac{1}{\ep \log^{\ep}(2)}<\infty,
$$
we have the tail term 
\beq
\label{tail 2 7}
\sum_{i=k^2+1}^{\infty} \frac{1}{i \log^{1+\ep}(i)}=o(1)
\eeq
as $k\to\infty$. Thus combining \eqref{tail 2 2}- \eqref{tail 2 7}, we have shown \eqref{tail 2} and thus finished the proof of this lemma. 
\end{proof}

\begin{proof}[Proof of Theorem \ref{Theorem 3}]
With Lemma \ref{lemma uniform bound return}, and recalling that 
$$
\mathcal{\tilde D}_N=\mathcal{\tilde D}_\infty\cap \mathcal{D}_{N}
$$
and
$$
C_N=\left|\mathcal{\tilde D}_N\right|=\sup\{k: \  n_k\le N\},
$$
we can define the stopping times $\bar T_{\mathcal{\tilde D}_{[N/3]},0}=0$, 
$$
\bar T_{\mathcal{\tilde D}_{[N/3]},1}=\inf\left\{n>0,  \ X_{3,n}\in \mathcal{\tilde D}_{[N/3]}\right\}
$$
and for all $k\ge 2$
$$
\bar T_{\mathcal{\tilde D}_{[N/3]},k}=\inf\left\{n>\bar T_{\mathcal{\tilde D}_{[N/3]},k-1}, \  X_{3,n}\in \mathcal{\tilde D}_{[N/3]}\right\}.
$$
Then by Lemma \ref{lemma uniform bound return}, one can immediately see that for any $k\ge 0$ 
$$
P\left(T_{\mathcal{\tilde D}_{[N/3]},k+1}<\infty\Big| \bar T_{\mathcal{\tilde D}_{[N/3]},k}<\infty \right)\le P_{X_{3,\bar T_{\mathcal{\tilde D}_{[N/3]},k}}}(\bar T_{\mathcal{\tilde D}_\infty}<\infty)\le 1-c_{\ep,1},
$$
and thus 
\beq
\label{equation theorem 3 1}
\begin{aligned}
P\left(T_{\mathcal{\tilde D}_{[N/3]},C_{[N/3]}}<\infty\right)&=\prod_{k=0}^{C_{[N/3]}-1}P\left(T_{\mathcal{\tilde D}_{[N/3]},k+1}<\infty\Big| \bar T_{\mathcal{\tilde D}_{[N/3]},k}<\infty \right)\\
&\le(1-c_{\ep,1})^{C_{[N/3]}}.
\end{aligned}
\eeq

By Lemma \ref{lemma C_N} we have 
\beq
\label{equation theorem 3 2}
C_{[N/3]}\ge 2^{-\ep-1} [N/3]\log^{-1-\ep}([N/3]) \ge \frac{2^{-\ep-2}}{3} N\log^{-1-\ep}(N)
\eeq
for all $N\ge 4$. Thus combining \eqref{equation theorem 3 1} and \eqref{equation theorem 3 2}
\beq
\begin{aligned}
P\left(\overset{\nearrow}{\mathcal{P}}\subseteq {\rm Trace}\big(\{X_{3,n}\}_{n=0}^\infty\big)\right)&\le P\left(\mathcal{D}_{[N/3]}\subseteq {\rm Trace}\big(\{X_{3,n}\}_{n=0}^\infty\big)\right)\\
&\le P\left(\mathcal{\tilde D}_{[N/3]}\subseteq {\rm Trace}\big(\{X_{3,n}\}_{n=0}^\infty\big)\right)\\
&\le P\left(T_{\mathcal{\tilde D}_{[N/3]},C_{[N/3]}}<\infty\right)\\
&\le \exp\left(-c_\ep N\log^{-1-\ep}(N)\right)
\end{aligned}
\eeq
where $c_\ep=-\frac{2^{-\ep-2}}{3} \log(1-c_{\ep,1})$. And the proof of Theorem \ref{Theorem 3} is complete. \end{proof}

\section{Discussions}
\label{Section discussion}

In Conjecture \ref{conjecture 3}, we conjecture that the cover probability should have exponential decay just as the $d\ge 4$ case. This conjecture seems to be supported by the following preliminary simulation which shows the log-plot of probabilities that the first $5000$ steps of a 3 dimensional simple random walk starting at 0 cover $\mathcal{D}_i=\{(0,0,0), (1,1,1),\cdots, (i,i,i)\}$ for $i=1,2,\cdots,9$. 

\begin{figure}[h]
\label{fig: 3}
\center
\includegraphics[width=0.65\columnwidth]{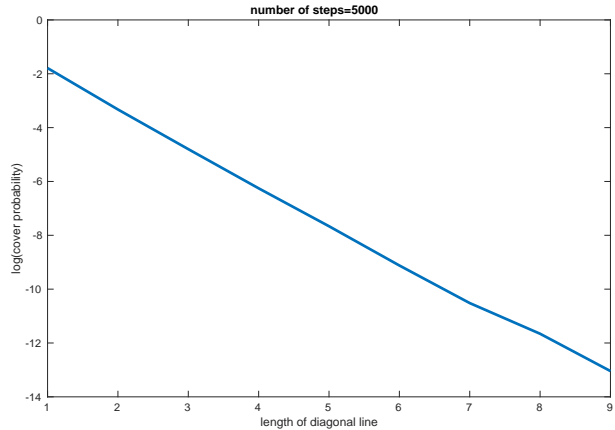} 
\caption{log-plot of covering probabilities of $\mathcal{D}_i$, $i=1,2,\cdots,9$}
\end{figure}

The simulation result above seems to indicate that after taking logarithm, the covering probability decays almost exactly as a linear function, which implies the exponential decay we predicted, indicating that the upper bound we found in Theorem \ref{Theorem 3} is not sharp. For $N=9$, if Theorem \ref{Theorem 3} were sharp and there were a correction greater than $\log(N)$ in the exact decaying rate, then in the log-plot, it would cause the point to be $\log\log(9)\ge 0.787$ above the line. This is not seen in the simulation. However, the simulation above does not rule out the possibility that there is correction of a smaller order than $\log(N)$, since it could be so small for the initial 9 $i$'s and thus has not be seen significantly yet in the current simulation.

Another possible approach towards a sharp asymptotic is noting that although $\{\hat X_{2,n}\}_{n=0}^\infty$ is recurrent and will return to 0 with probability 1, the expected time between each two successive returns is $\infty$. Moreover, in order to cover $\overset{\nearrow}{\mathcal{P}}$, only those returns to diagonal before that $\{X_{3,n}\}_{n=0}^\infty$ has left $B_2(0,N)\supset B_1(0,N)$ forever could possibly help. This observation, together with the tail probability asymptotic estimations using local central limit theorem and techniques in \cite{excursion2} and \cite{excursion22} applied on the non simple random walk $\{\hat X_{2,n}\}_{n=0}^\infty$, and some large deviation argument, enable us to find a proper value of $T$ such that 
\begin{itemize}
\item with high probability $\{X_{3,n}\}_{n=T}^\infty\cap B_2(0,N)=\O$,
\item with high probability $\{\hat X_{2,n}\}_{n=0}^T$ will not return to 0 for $[N/3]$ times or more. 
\end{itemize} 
Right now this approach can only give us the following weaker upper bound (a detailed proof can be found in technical report \cite{weak_result}):
\begin{proposition}
\label{proposition weak}
There are $c, C\in (0,\infty)$ such that for any nearest neighbor path $\mathcal{P}=(P_0,P_1,\cdots, P_K)\subset \ZZ^3$ connecting 0 and $\partial B_1(0,N)$,
$$
P\left({\rm Trace}(\mathcal{P})\subseteq {\rm Trace}\big(\{X_{3,n}\}_{n=0}^\infty\big) \right)\le C\exp\left(-c N^{1/3}\right). 
$$
\end{proposition}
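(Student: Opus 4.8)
The plan is to combine the reduction to the monotone path $\overset{\nearrow}{\mathcal P}$ from Theorem~\ref{theorem D random walk} with a time-threshold argument that plays off two bad events against each other. Letting $L\to\infty$ in Theorem~\ref{theorem D random walk} and using $\mathcal{D}_{[N/3]}\subseteq\overset{\nearrow}{\mathcal P}$, it suffices to bound $P(\mathcal{D}_{[N/3]}\subseteq{\rm Trace}(\{X_{3,n}\}_{n=0}^\infty))$. The point is that $X_{3,n}$ lies on the diagonal line exactly when $\hat X_{2,n}=0$, and that every point of $\mathcal{D}_{[N/3]}$ lies in $B_1(0,N)\subseteq B_2(0,N)$; hence if $\{X_{3,n}\}$ covers $\mathcal{D}_{[N/3]}$, then $\hat X_2$ must visit $0$ at at least $[N/3]$ times $n\ge1$, all occurring before the a.s.\ finite last exit time $T^*:=\sup\{n:X_{3,n}\in B_2(0,N)\}$.

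Fix a deterministic threshold $T=T(N)$. On the covering event, either $T^*>T$, or else $T^*\le T$, in which case $\hat X_2$ has already returned to $0$ at least $[N/3]$ times by time $T$; writing $R_T=\#\{1\le n\le T:\hat X_{2,n}=0\}$ this gives
\[
P\big(\mathcal{D}_{[N/3]}\subseteq{\rm Trace}(\{X_{3,n}\})\big)\le P(T^*>T)+P\big(R_T\ge[N/3]\big)=:p_1(T)+p_2(T).
\]
To bound $p_1$ I would use the local central limit theorem for $3$-dimensional simple random walk, $P(X_{3,n}=y)\le Cn^{-3/2}$ uniformly in $y$, together with a union bound: $p_1(T)\le\sum_{n>T}\sum_{y\in B_2(0,N)}P(X_{3,n}=y)\le C'N^3\sum_{n>T}n^{-3/2}\le C''N^3T^{-1/2}$. (One can sharpen the $N$-dependence by first requiring $|X_{3,T}|_2$ to be large and then invoking $P_x(T_{B_2(0,N)}<\infty)\le CN/|x|_2$, but this is not needed here since $T$ will be superpolynomial in $N$.)

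To bound $p_2$, note that $\{\hat X_{2,n}\}_{n\ge0}$ is an aperiodic, mean-zero, finite-variance random walk on $\Z^2$ (its one-step law is uniform on the six vectors $\{(\pm1,0),(0,\pm1),(1,-1),(-1,1)\}$). By the $2$-dimensional local central limit theorem $P(\hat X_{2,n}=0)\sim c_0/n$, and a standard renewal/Tauberian argument — precisely the sort of tail estimate obtained for non-simple walks via the excursion techniques of \cite{excursion2, excursion22} — shows that the first return time $\tau$ of $\hat X_2$ to $0$ satisfies $P(\tau>t)\ge c_1/\log t$ for all large $t$. Since the successive return times $\tau^{(1)},\tau^{(2)},\dots$ are i.i.d.\ copies of $\tau$ by the strong Markov property and $\{R_T\ge m\}\subseteq\bigcap_{i=1}^m\{\tau^{(i)}\le T\}$, we obtain $p_2(T)\le(1-P(\tau>T))^{[N/3]}\le\exp(-c_1[N/3]/\log T)$. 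Now optimize the threshold: choosing $T=\lceil e^{N^{2/3}}\rceil$ gives $p_2(T)\le\exp(-c_1[N/3]\,N^{-2/3})\le C\exp(-cN^{1/3})$ and $p_1(T)\le C''N^3e^{-N^{2/3}/2}\le C\exp(-cN^{1/3})$; adding the two and applying Theorem~\ref{theorem D random walk} finishes the proof.

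The genuinely hard point is the tail estimate $P(\tau>t)\gtrsim1/\log t$ for the specific non-simple, recurrent $2$-dimensional walk $\{\hat X_{2,n}\}$: it is this logarithmically slow decay that makes ``$[N/3]$ returns by time $T$'' costly only once $\log T\ll N$, so one cannot take $T$ much larger than $e^{N^{1-\delta}}$, while controlling $p_1$ forces $T$ to be at least superpolynomial. Squeezing $T$ between these two constraints is exactly what produces a stretched-exponential rate, and with the clean estimates above it lands at $N^{1/3}$ rather than the exponential rate of Conjecture~\ref{conjecture 3}. Making the renewal/Tauberian step fully rigorous for this particular walk — rather than quoting it — is where the local CLT and \cite{excursion2, excursion22} do the work, and is the main obstacle in the write-up.
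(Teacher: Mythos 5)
Your proposal is correct and follows essentially the same route the paper outlines for Proposition~\ref{proposition weak} (the paper defers the details to the technical report \cite{weak_result}): introduce a threshold $T$, bound the probability that $\{X_{3,n}\}_{n>T}$ revisits $B_2(0,N)$ via the local CLT, and bound the probability that $\{\hat X_{2,n}\}_{n\le T}$ returns to $0$ at least $[N/3]$ times via the $P(\tau>t)\gtrsim 1/\log t$ tail of the return time, then optimize $T$ (your choice $T\approx e^{N^{2/3}}$ balancing the two terms is exactly what yields the $N^{1/3}$ exponent). The one ingredient you rightly flag as needing care, the lower bound on $P(\tau>t)$ for the non-simple recurrent walk $\hat X_2$, is standard for aperiodic mean-zero finite-variance walks on $\ZZ^2$ and is precisely where the paper invokes the local CLT and the excursion techniques of \cite{excursion2,excursion22}.
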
 

However, this seemingly worse approach might have the potential to fully use the geometric property of path $\overset{\nearrow}{\mathcal{P}}$ to minimize the covering probability. Note that in order to cover $D_{[N/3]}$ we not only need $\{\hat X_{2,n}\}_{n=0}^\infty$ return to 0 for at least $[N/3]$ times before $\{X_{3,n}\}_{n=0}^\infty$ forever leaving $B_2(0,N)$, but also must have the locations of $X_{3,n}$ at such visits cover each point on the diagonal (let alone the request of covering the off diagonal points as well). I.e., define the stopping times $\tau_{l_3,0}=0$
$$
\tau_{l_3,1}=\inf\{n\ge 1: \ \hat X_{2,n}=0\}
$$
and for all $i\ge 2$
$$
\tau_{l_3,i}=\inf\{n> \tau_{l_3,i-1}: \ \hat X_{2,n}=0\}.
$$
Define 
$$
\{Z_{3,n}\}_{n=0}^\infty=\left\{X_{3,\tau_{l_3,n}}^1+X_{3,\tau_{l_3,n}}^2+X_{3,\tau_{l_3,n}}^{3}\right\}_{n=0}^\infty. 
$$
Noting that $\tau_{l_3,i}<\infty$ for any $i$, and that $\{X_{3,n}\}_{n=0}^\infty$ is translation invariant, $\{Z_{3,n}\}_{n=0}^\infty$ is a well defined one dimensional random walk with infinite range. And we have
$$
P\left({\rm Trace}(\mathcal{P})\subseteq {\rm Trace}\big(\{X_{3,n}\}_{n=0}^\infty\big) \right)\le P\left((0,1,\cdots, [N/3])\subseteq {\rm Trace}\big(\{Z_{3,n}\}_{n=0}^\infty\big)\right). 
$$
Thus Conjecture \ref{conjecture 3} would follow from the techniques described above for Proposition \ref{proposition weak} if the following conjecture is proved.
\begin{conjecture}
There is a $c \in (0,\infty)$ such that for any $N\ge 2$ 
$$
P\left((0,1,\cdots, [N/3])\subseteq {\rm Trace}\big(\{Z_{3,n}\}_{n=0}^{N^3}\big)\right)\le \exp(-c N).
$$
\end{conjecture}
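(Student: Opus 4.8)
The plan is to prove the stronger statement that even with no time restriction,
\[
P\big(\{0,1,\dots,[N/3]\}\subseteq {\rm Trace}(\{Z_{3,n}\}_{n=0}^\infty)\big)\le e^{-cN};
\]
the bound with the cutoff $N^3$ follows a fortiori, so the cutoff plays no essential role and I will ignore it. Write $M=[N/3]$ and $W_i=Z_{3,i}-Z_{3,i-1}$. The first task is to pin down the tail of the step distribution. Since $\{\hat X_{2,n}\}$ is a genuine (non-simple) recurrent planar walk, a return excursion to $0$ has length $\tau$ with $P(\tau>t)\sim c/\log t$, and $W_i$ is a bounded, mean-zero additive functional of that excursion — it is the net count, along the excursion, of the three step types that raise $X^1+X^2+X^3$ against the three that lower it — so conditionally on $\tau=t$ it is of order $\sqrt t$ with Gaussian-type fluctuations by the local central limit theorem; the references \cite{excursion2,excursion22} are exactly the tools for making that conditional statement uniform. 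Combining the two estimates gives $P(|W_i|>m)\asymp 1/\log m$, hence $P(W_i=k)\asymp 1/(|k|\log^2|k|)$, and in particular $\{Z_{3,n}\}$ is a symmetric, transient one-dimensional walk whose truncated variance satisfies ${\rm Var}(W_i;|W_i|\le m)\asymp m^2/\log^2 m$.

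The second task is to reduce the covering event to an occupation-time event for the window $I=[0,M]$. Let $\mathcal O=\#\{n\ge 0:\ Z_{3,n}\in I\}$. Covering $I$ trivially forces $\mathcal O\ge M+1$, but this alone is too weak: a single long sojourn in $I$ shows $P(\mathcal O\ge t)$ is only of order $\exp(-ct/\log M)$, which at $t\asymp M$ yields $\exp(-cM/\log M)$, no better than Theorem \ref{Theorem 3}. The key extra input I would establish is a coupon-collector lower bound: covering all $M+1$ points of $I$ in fact forces $\mathcal O\ge c\,M\log M$, up to an event of probability $e^{-cM}$. Heuristically, a walk that enters $I$ only $\mathcal O$ times revisits already-visited points at a rate comparable to the fraction of $I$ already covered, so hitting every point of an interval of length $M$ must cost $\asymp M\log M$ visits; combined with $P(\mathcal O\ge c\,M\log M)\le \exp(-c' M\log M/\log M)=e^{-c''M}$ this gives the bound. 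The estimate $P(\mathcal O\ge t)\le \exp(-ct/\log M)$ itself should be routine: decompose $\mathcal O$ into sojourns in $I$, bound a single sojourn length stochastically by a geometric variable of parameter $\asymp 1/\log M$ (from any point of $I$ the walk exits on the next step with probability $\gtrsim 1/\log M$, using $P(|W_i|>M)\asymp 1/\log M$ and the constant-order exit chance near $\partial I$), and control the number of sojourns via the transient hitting probability $P_y(T_I<\infty)$ for $y\notin I$, estimated through the local central limit theorem for $\{Z_{3,n}\}$.

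The step I expect to be the real obstacle is the coupon-collector lower bound $\mathcal O\gtrsim M\log M$. The difficulty is a mismatch of scales: inside $I$ the walk has increment standard deviation $\asymp M/\log M$, so a single sojourn lasts only $\asymp \log M$ steps and explores a sub-interval of length $\asymp M/\sqrt{\log M}$ — the walk leaks out of $I$ long before it equilibrates across $I$, so the clean picture ``a random visit to $I$ lands on a near-uniform point of $I$'' fails, and one must instead run the coupon-collector argument across many sojourns with different entrance points. Controlling the law of the entrance point of a fresh sojourn (it is the first point of $I$ hit after an excursion that typically exits $I$ by an overshoot of order up to $M$), and showing that the union of the per-sojourn ``blobs'' cannot exhaust $I$ unless their number, equivalently $\mathcal O$, is of order $M\log M$, is exactly the content absent from the proof of Theorem \ref{Theorem 3}, and it is where the extra logarithm — the gap between $\log^{-(1+\ep)}(N)$ and a genuine constant — has to be recovered. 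A secondary obstacle is that $P_y(T_I<\infty)$ for $y$ just outside $I$ is $1-\Theta(1/\log M)$ rather than bounded away from $1$, so the number of sojourns has no clean geometric tail and must be estimated jointly with the sojourn lengths; the two quantities offset each other, and it is their combination that should produce the $\exp(-cM\log M/\log M)=e^{-cN}$ rate.
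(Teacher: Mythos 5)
The statement you are addressing is not proved in the paper: it is stated as an open conjecture, and the authors only observe that it would, combined with the excursion/large-deviation machinery behind Proposition \ref{proposition weak}, imply Conjecture \ref{conjecture 3}. So there is no ``paper proof'' to compare against; the only question is whether your proposal closes the gap, and it does not. Your reduction is reasonable in outline --- the tail estimate $P(|W_i|>m)\asymp 1/\log m$ is the right heuristic (it matches the paper's remark that the inter-return times of $\hat X_{2,n}$ have infinite mean), and the occupation-time bound $P(\mathcal O\ge t)\le \exp(-ct/\log M)$ is plausible, though even there the number of sojourns has no clean geometric tail and must be handled jointly with the sojourn lengths, as you note. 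But the entire argument hinges on the coupon-collector lower bound ``covering $[0,M]$ forces $\mathcal O\ge cM\log M$ outside an event of probability $e^{-cM}$,'' and you explicitly do not prove it; you only explain why the naive version fails (a sojourn equilibrates over a sub-window of length $M/\sqrt{\log M}$ rather than over all of $I$, the entrance law of a fresh sojourn is uncontrolled, overshoots are of order $M$). Without that lemma the chain of estimates collapses to $\mathcal O\ge M$ and hence $\exp(-cM/\log M)$, which is exactly the rate already delivered by Theorem \ref{Theorem 3}; the extra logarithm you need is precisely the content of the unproven step. It is also worth flagging that the lemma as stated is not obviously true: because $P(W_i=\pm 1)$ is bounded below by a constant, the walk can cover $[0,M]$ by taking $M$ unit steps, an event of probability $e^{-cM}$ on which $\mathcal O=M+1\ll M\log M$; so the ``forces $\mathcal O\gtrsim M\log M$'' claim can only hold up to an exceptional event whose probability must itself be shown to be $e^{-cM}$, which is essentially circular with the conjecture. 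In short: this is a sensible research plan with a correctly identified obstruction, not a proof.
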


\bibliography{mono}
\bibliographystyle{plain}

\end{document}